\newcommand{\fif}{if and only if}
\newtheorem{tm}{Theorem}
\newtheorem{lemma}[tm]{Lemma}
\newtheorem{cor}[tm]{Corollary}
\theoremstyle{definition}
\newcommand{\beqa}{\begin{eqnarray*}}
\newcommand{\eeqa}{\end{eqnarray*}}
\DeclareMathOperator*{\supp}{supp}
\newcommand{\field}[1]{\mathbb{#1}}
\newcommand{\bR}{\field{R}}        
\newcommand{\bN}{\field{N}}        
\newcommand{\bZ}{\field{Z}}        
\newcommand{\bC}{\field{C}}        
\def\cO{\mathcal{ O}}
\def\<{\left<}
\def\>{\right>}
\def\inv{^{-1}}
\def\mv1{M_v^1}
\newcommand{\sis}{shift-invariant}
\begin{document}
\begin{abstract}
We study the problem of recovering  a function of the form $f(x) = \sum
_{k\in \bZ } c_k e^{-(x-k)^2}$ from its phaseless samples $|f(\lambda
)|$ on some arbitrary countable set $\Lambda \subseteq \bR $. For
real-valued functions  this is possible up to a sign  for every separated set with Beurling density
$D^-(\Lambda ) >2$. This result is sharp. For complex-valued functions we find all possible
solutions with the same phaseless samples. 
\end{abstract}

\title{Phase-Retrieval in Shift-Invariant Spaces with Gaussian Generator}
\author{Karlheinz Gr\"ochenig}
\address{Faculty of Mathematics \\
University of Vienna \\
Oskar-Morgenstern-Platz 1 \\
A-1090 Vienna, Austria}
\email{karlheinz.groechenig@univie.ac.at}
\subjclass[2010]{94A12,42C15,49N30}
\date{}
\keywords{Phase-retrieval, shift-invariant space, Gaussian,
  Hadamard factorization}
\thanks{K.\ G.\ was
  supported in part by the  project P31887-N32  of the
Austrian Science Fund (FWF)} 
\maketitle


In many measurements only non-negative  amplitudes or intensities are
recorded rather than  the physical quantity  itself. The problem then
is to recover the underlying object (signal, image) from
these phaseless measurements, in other words, to retrieve the phase
from the  magnitude.
This is the problem of phase-retrieval. Particular problems concern
the recovery of a function $f$ from its phaseless  Fourier
measurements $|\hat{f}(\xi )|$ or the recovery of a function $f$ from
its phaseless (or unsigned) samples $|f(\lambda )|$.   For general
information about the phase-retrieval problem we refer to 
the surveys~\cite{SEC15,GKR19}.

The question of phase-retrieval is usually ill-posed or even
meaningless, unless one imposes  additional assumptions on the
function to be reconstructed, such as 
a particular  signal model.    
In this note we consider the problem of phase-retrieval in the 
particular class of  shift-invariant spaces generated by a
Gaussian. The prototype of a shift-invariant space is the Paley-Wiener
space of bandlimited functions $\{ f\in L^2(\bR ): \supp \, \hat{f}
\subseteq [-1/2, 1/2]\}$. More general shift-invariant spaces serve as
a substitute for bandlimited functions and have received wide
attention in approximation theory and sampling theory~\cite{AG00,BVR94}. 

Given a generator $g\in L^2(\bR )$,  $p\in [1,\infty]$, and a mesh
parameter or step size $\beta >0$, let
\begin{equation}
  \label{eq:1}
  V^p_\beta (g) = \big\{ f = \sum _{k\in \bZ } c_k g(\cdot - \beta k): c\in \ell ^p(\bZ
  )\big\}  \, .
\end{equation}

One of the versions of phase-retrieval in \sis\ spaces  is the recovery
of a function $f$ (up to a scalar) from its phaseless  samples
$|f(\lambda )|$ on some set $\Lambda $.  The question then is whether
the additional information  that   $f$ belongs to the
\sis\ space $V^p_\beta (g)$ suffices to determine $f$ up to a sign.  
For the prototype of a \sis\ space, namely the bandlimited functions,
this question was solved 
~\cite{Thak11}. Recently Q.\ Sun and his
collaborators have developed a general theory for phase-retrieval in
\sis\ spaces in a series of articles~\cite{CCS19,CCSW19,CS19a}. A typical result asserts that the samples of
$|f|$ on a sufficiently dense union of shifted lattices suffice to
recover real-valued functions in some $V^2(g)$. These papers also
cover some of the numerical aspects of
phase-retrieval. The problem of  phase-retrieval in \sis\ spaces from Fourier
measurements is studied in~\cite{SMS16}. 


We study the problem of  phase-retrieval in the \sis\
space generated by a Gaussian $\phi _\gamma (x) = e^{-\gamma x^2}$. 
Precisely,  we will assume that $f$ is a linear combination of shifts
of a Gaussian and belongs to the  \sis\ space
$$
V^\infty _\beta (\phi _\gamma ) = \big\{ f\in L^\infty (\bR ): f(x) =
\sum _{k\in \bZ}
c_k e^{-\gamma (x-\beta k)^2}  \quad c \in \ell ^\infty (\bZ
)\big\} \, .
$$
We will allow samples from an  arbitrary separated set $\Lambda \subset
\bR $ and measure the 
density of $\Lambda $  with the standard notion of the lower  Beurling density   defined as
\begin{equation}
  \label{eq:2}
D^{-}(\Lambda) := \liminf_{r \rightarrow \infty}
\inf_{x \in \bR} \frac{\# \Lambda \cap [x-r,x+r] }{2r} \, .
\end{equation}

 We will  first consider  consider real-valued functions and 
unsigned samples $|f(\lambda )|$. For this case   
we  prove the following uniqueness  theorem.
\begin{tm} \label{main}
  Assume that $\Lambda \subseteq \bR $ is separated and that
  $D^-(\Lambda ) > 2\beta \inv $. Then phase-retrieval is possible on
  $\Lambda $ for all 
  real-valued functions in   $V_\beta ^\infty (\phi
  _\gamma )$. This means that an unknown $f\in V^\infty _\beta (\phi
  _\gamma )$  is uniquely determined by its phaseless samples
  $\{  |f(\lambda )| : \lambda \in \Lambda \}$ up to a sign. 
\end{tm}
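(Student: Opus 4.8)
The plan is to reduce the phase-retrieval problem to a rigidity statement about the zero set of a single function living in a larger Gaussian shift-invariant space. First, note that every $f\in V^\infty_\beta(\phi_\gamma)$ can be written as $f(x)=e^{-\gamma x^2}G(x)$ with
$$G(x)=\sum_{k\in\bZ} c_k\, e^{-\gamma\beta^2 k^2}\, e^{2\gamma\beta k x},$$
where the coefficients decay super-exponentially, so $G$ is entire; completing the square gives $|G(x)|\le C_f\, e^{\gamma(\operatorname{Re}x)^2}$, so $G$ has order at most $2$, and if $f$ is real-valued on $\bR$ then the $c_k$ are real (shifted Gaussians being linearly independent). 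Now suppose $f_1,f_2\in V^\infty_\beta(\phi_\gamma)$ are real with $|f_1(\lambda)|=|f_2(\lambda)|$ for all $\lambda\in\Lambda$. Since $f_i$ is real, $|f_i|^2=f_i^2$, and completing the square in $e^{-\gamma(x-\beta k)^2-\gamma(x-\beta l)^2}$ yields
$$f_i^2=\sum_{m\in\bZ} a^{(i)}_m\,\phi_{2\gamma}\!\big(x-\tfrac{\beta}{2}m\big),\qquad a^{(i)}_m=\sum_{k+l=m} c^{(i)}_k c^{(i)}_l\, e^{-\gamma\beta^2(k-l)^2/2},$$
with $(a^{(i)}_m)$ bounded uniformly (the sum over $k+l=m$ is a full theta-sum). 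Hence $h:=f_1^2-f_2^2\in V^\infty_{\beta/2}(\phi_{2\gamma})$, and $h$ vanishes on all of $\Lambda$ by hypothesis.

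The heart of the argument is then the following rigidity lemma, which I would isolate and prove separately: \emph{a function $g\not\equiv 0$ in $V^\infty_\mu(\phi_\delta)$ has at most $\mu^{-1}\ell+o(\ell)$ zeros in any interval of length $\ell$; equivalently its zero set $Z(g)$ has upper Beurling density $D^+(Z(g))\le \mu^{-1}$.} I expect this to be the main obstacle, since an entire function of order $2$ may have far more than $O(\ell)$ zeros in $|w|\le\ell$, so the special structure must be used. Writing $g(x)=e^{-\delta x^2}F(e^{2\delta\mu x})$ with $F(w)=\sum_k c_k e^{-\delta\mu^2k^2}w^k$ holomorphic on $\bC\setminus\{0\}$, the real zeros of $g$ on $[a,b]$ are zeros of $F$ on the positive axis inside the annulus $\{e^{2\delta\mu a}\le|w|\le e^{2\delta\mu b}\}$. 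Two routes look viable. \emph{(i) Jensen's formula on annuli:} the circular mean $M(t)=\tfrac1{2\pi}\int_0^{2\pi}\log|F(e^t e^{i\theta})|\,d\theta$ is convex in $t$, satisfies $M(t)\le \tfrac{t^2}{4\delta\mu^2}+O(1)$ and is bounded below, and the number of zeros of $F$ in the annulus equals $M'(\log\rho_2)-M'(\log\rho_1)$; convexity then forces the density bound, with some care needed to pin down exactly the constant $\mu^{-1}$. \emph{(ii) Total positivity:} the kernel $e^{-\delta(x-t)^2}$ is strictly totally positive, so the synthesis map $c\mapsto\sum_k c_k\phi_\delta(\cdot-\mu k)$ is variation-diminishing; on a window $[a,b]$ only the $\mu^{-1}(b-a)+O(1)$ indices $k$ with $\mu k$ near $[a,b]$ contribute non-negligibly, and discarding the exponentially small tail leaves at most $\mu^{-1}(b-a)+o(b-a)$ sign changes.

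With the lemma in hand the proof closes quickly. Applying it to $h\in V^\infty_{\beta/2}(\phi_{2\gamma})$ gives $D^+(Z(h))\le 2\beta^{-1}$; since $\Lambda\subseteq Z(h)$ we would obtain $D^-(\Lambda)\le D^+(Z(h))\le 2\beta^{-1}$, contradicting $D^-(\Lambda)>2\beta^{-1}$ unless $h\equiv 0$. Thus $f_1^2\equiv f_2^2$ on $\bR$, i.e.\ $(f_1-f_2)(f_1+f_2)\equiv 0$. Both factors are restrictions to $\bR$ of entire functions times $e^{-\gamma x^2}$, hence real-analytic; if $f_1-f_2\not\equiv 0$ its zero set is discrete, so $f_1+f_2$ vanishes on a dense open set and therefore, by continuity, identically. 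Hence $f_1=f_2$ or $f_1=-f_2$, as claimed. The threshold $2\beta^{-1}$ is exactly the upper density of the zero set of a generic $h=f_1^2-f_2^2$ in $V^\infty_{\beta/2}(\phi_{2\gamma})$, which is where the sharpness announced in the abstract comes from.
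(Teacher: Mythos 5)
Your reduction is the same as the paper's: squaring a real-valued $f$ lands $|f|^2=f^2$ in $V^\infty_{\beta/2}(\phi_{2\gamma})$ with bounded coefficients (your formula for $a^{(i)}_m$ agrees with the paper's $\widetilde{r_n}$ after the substitution $l=n-k$), so $h=f_1^2-f_2^2$ lies in that space and vanishes on $\Lambda$; and the final passage from $f_1^2\equiv f_2^2$ to $f_1=\pm f_2$ via analyticity is fine. The gap is that the entire weight of the argument now rests on your ``rigidity lemma,'' which you do not prove. That lemma is not a routine complex-analysis fact: it is (essentially) the uniqueness half of the sharp sampling theorem for Gaussian shift-invariant spaces, i.e.\ Theorem~\ref{t2} of the paper, quoted from \cite[Thm.~4.4]{GRS18}, whose proof relies on the deep machinery of totally positive functions and Gabor frames. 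The paper simply invokes that theorem (for $p=\infty$, the lower sampling bound $A\|h\|_\infty\le\sup_{\lambda}|h(\lambda)|$ forces $h\equiv 0$ once $D^-(\Lambda)>2\beta^{-1}$); you propose to reprove it from scratch, and neither of your two sketched routes closes.

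Concretely: in route (i), Jensen's formula on annuli gives $n([t_1,t_2])=M'(t_2^+)-M'(t_1^-)$ with $M$ convex and $M(t)\le \tfrac{t^2}{4\delta\mu^2}+O(1)$, but to convert this into a zero count of the right order you also need a matching \emph{lower} bound $M(t)\ge \tfrac{t^2}{4\delta\mu^2}-O(|t|)$. Convexity alone only yields a linear lower bound on $M$ (from its values at two points), and feeding that into the difference quotients gives the zero-density bound $2\mu^{-1}$ rather than $\mu^{-1}$ --- which, after the squaring step, would only prove the theorem under $D^-(\Lambda)>4\beta^{-1}$, missing the sharp threshold that is the whole point. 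The missing lower bound on the circular means is precisely the hard content of the sampling theorem. Route (ii) has an analogous localization problem: Schoenberg's variation-diminishing property bounds the \emph{global} number of sign changes of $\sum_k c_k\phi_\delta(\cdot-\mu k)$ by the sign changes of $c$, which is useless for $c\in\ell^\infty$, and your proposed truncation to a window is not justified because the discarded tail, though exponentially small, can exceed $|g|$ exactly where $g$ is near its zeros. Unless you intend to supply a genuinely new proof of the sharp zero-density bound, you should cite Theorem~\ref{t2} / \cite{GRS18} at this step, as the paper does; with that substitution your argument coincides with the paper's proof.
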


Theorem~\ref{main} says that for real-valued  $f, g \in
V^\infty _\beta (\phi _\gamma )$ satisfying  $|f(\lambda ) | = |g(\lambda )|$
for all $\lambda \in \Lambda $ we have either  $g = f$ or $g=-f$, thus the only
ambiguity is  the (global)
sign of $f$. Our proof yields a reconstruction
procedure (see Section~\ref{complex}), but it is well-known that the phase-retrieval
problem in infinite dimensions is necessarily  ill-posed~\cite{AGr17,CCD16,GKR19}.

In the second part we will study complex-values functions in $V^\infty
_\beta (\phi _\gamma )$. In this case there is no uniqueness, instead
we will classify all functions $g\in V^\infty
_\beta (\phi _\gamma )$ that satisfy $|g(\lambda )| = |f(\lambda )|$
on $\Lambda $. 

\vspace{3mm}

Before proceeding, let us discuss some of the 
fine points of Theorem~\ref{main}.

(i)   Remarkably, the uniqueness  holds even for bounded
coefficients, and not just for  square-summable  coefficients.  We can therefore not rely on the Hilbert space theory
for phase-retrieval, but technically we need to rely on the Banach
space set-up of Alaifari and Grohs~\cite{AGr17}.

(ii) The density condition is sharp.  For uniform density $D(\Lambda ) <
2\beta \inv $ one can produce  essentially different real-valued
functions $f,g$ with the same phaseless 
samples $|g(\lambda ) | = |f(\lambda )|, \lambda \in \Lambda $. See
below.

(iii) A similar  statement  for bandlimited functions was proved  by 
Thakur~\cite{Thak11} and revisited in~\cite{AGr17}.  Both \cite{Thak11} and Theorem~\ref{main} support
the intuition that the recovery of phase from magnitude requires twice
as many samples as the recovery from the samples $f(\lambda )$. This
is well known for finite-dimensional frames, but in infinite
dimensions it  is  more subtle
to formulate and prove.  To our knowledge Theorem~\ref{main} is one of
only two models 
for which phase-retrieval is possible with a sharp density
condition. The investigations in~\cite{CCS19,CCSW19,CS19a} require a
much higher sampling rate or deal with conditions under which
phase-retrieval is not even possible. 

In Section~\ref{real} we collect several statements about the \sis\
space $V^\infty _\beta (\phi _\gamma )$ and then prove
Theorem~\ref{main}. In Section~\ref{complex} we study phase-retrieval
for complex-valued functions. There  we use a factorization
of period entire functions whose proof is postponed to
Section~\ref{hadaa}.  

\section{Phase-Retrieval for Real-Valued Functions}
\label{real}

We set up the proof of Theorem~\ref{main}. 
To avoid unnecessary parameters, we 
  set $\beta =1$,  without loss of generality. 
 This is possible because $f(x) = \sum _{k}
c_k e^{-\gamma (x-\beta k)^2} = \sum _{k}
c_k e^{-\gamma \beta ^2 (x/\beta - k)^2} $. This means that  $f \in
V^\infty _\beta (\phi _\gamma )$, \fif\ $f ( \beta \cdot ) \in
V^\infty _1 (\phi _{\beta ^2\gamma })$. Thus   phase-retrieval on
$\Lambda $ is possible  for $V_1^\infty(\phi _{\beta ^2\gamma})$, \fif\
phase-retrieval on $\beta \Lambda $ is possible  for $V_\beta^\infty (\phi _{\gamma})$. We
 note that $D^-(\beta \Lambda ) = \beta \inv D^-(\Lambda )$. Thus it
 suffices to prove Theorem~\ref{main} for $\beta =1$. \\

Our first use of complex variable methods is the following lemma  for Fourier series.

 \begin{lemma} \label{comp}
   (i) Let $d$ be a sequence with Gaussian decay with decay parameter
   $\gamma >0$, i.e., $d_k = c_k
   e^{-\gamma k^2}$ for some $c\in \ell ^\infty (\bZ )$. Then the Fourier series $\hat{d}(\xi ) = \sum _{k\in \bZ } d_k
   e^{2\pi i k\xi }$ can be extended to an entire function $D(z) = \hat{d}(\xi +
   iy)$ with growth  of order $2$, precisely,  $|D(\xi +iy)| = \cO ( e^{\pi ^2
     y^2 / \gamma }) $.

   (ii) Conversely, if $D(z)$ is a periodic  entire function  $D(z+k)
   = D(z)$ for all $z\in \bC $ and $k\in \bZ $ with growth $|D(\xi
   +iy)| = \cO ( e^{\pi ^2
     y^2 / \gamma })$, then the Fourier series of $D(\xi )$ has
   coefficients of Gaussian decay $d_k = c_k e^{-\gamma k^2}$ for some
   $c\in \ell ^\infty (\bZ )$. 
 \end{lemma}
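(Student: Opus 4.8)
The plan is to prove the two directions of Lemma~\ref{comp} by matching the Gaussian decay of the Fourier coefficients against the order-$2$ growth of the periodic entire function, with the Jacobi theta function $\theta(z) = \sum_{k\in\bZ} e^{-\gamma k^2} e^{2\pi i k z}$ serving as the bridge in both directions.

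For part (i), I would start from $d_k = c_k e^{-\gamma k^2}$ with $c\in\ell^\infty(\bZ)$ and simply substitute $z = \xi+iy$ into the series: $D(z) = \sum_{k} c_k e^{-\gamma k^2} e^{2\pi i k(\xi+iy)} = \sum_k c_k e^{-\gamma k^2 - 2\pi k y} e^{2\pi i k\xi}$. The absolute value of the $k$-th term is $|c_k|\, e^{-\gamma k^2 - 2\pi k y}$, and completing the square in $k$ gives $-\gamma k^2 - 2\pi k y = -\gamma(k + \pi y/\gamma)^2 + \pi^2 y^2/\gamma$. Hence $|D(\xi+iy)| \le \|c\|_\infty e^{\pi^2 y^2/\gamma} \sum_k e^{-\gamma(k+\pi y/\gamma)^2}$, and the remaining sum is bounded uniformly in $y$ by comparison with the integral $\int_\bR e^{-\gamma t^2}\,dt$ (or by noting it is a translate of a convergent theta-type series and periodicity in the lattice shift). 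This yields absolute and locally uniform convergence on all of $\bC$, so $D$ is entire, and the estimate $|D(\xi+iy)| = \cO(e^{\pi^2 y^2/\gamma})$ follows. This direction is routine.

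Part (ii) is the substantive direction. Given a periodic entire $D$ with $|D(\xi+iy)| = \cO(e^{\pi^2 y^2/\gamma})$, its Fourier coefficients are recovered by $d_k = \int_0^1 D(\xi) e^{-2\pi i k\xi}\,d\xi$. Because $D$ is entire and periodic, I can shift the contour of integration: for any $y\in\bR$, $d_k = \int_0^1 D(\xi+iy) e^{-2\pi i k(\xi+iy)}\,d\xi = e^{2\pi k y}\int_0^1 D(\xi+iy)e^{-2\pi i k\xi}\,d\xi$ (the vertical edges cancel by periodicity, and there are no poles). Taking absolute values and inserting the growth bound, $|d_k| \le C\, e^{2\pi k y}\, e^{\pi^2 y^2/\gamma}$ for \emph{every} $y\in\bR$. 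Now optimize over $y$: the exponent $2\pi k y + \pi^2 y^2/\gamma$ is minimized at $y = -\gamma k/\pi$, giving $2\pi k(-\gamma k/\pi) + \pi^2(\gamma k/\pi)^2/\gamma = -2\gamma k^2 + \gamma k^2 = -\gamma k^2$. Therefore $|d_k| \le C e^{-\gamma k^2}$, i.e. $d_k = c_k e^{-\gamma k^2}$ with $c_k = d_k e^{\gamma k^2}$ bounded, which is exactly the claim.

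The main thing to be careful about is the contour-shift step in part (ii): one must justify that the integral of $D(z)e^{-2\pi i k z}$ over the boundary of the rectangle $[0,1]\times[0,y]$ vanishes, which uses that $D$ is entire (Cauchy's theorem) and that the contributions of the two vertical sides $\{0\}\times[0,y]$ and $\{1\}\times[0,y]$ cancel by the periodicity $D(z+1)=D(z)$ together with $e^{-2\pi i k(z+1)} = e^{-2\pi i k z}$. Once that is in place, the optimization over $y$ is elementary calculus and the result drops out; I expect no further obstacle.
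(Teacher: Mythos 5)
Your proposal is correct and follows essentially the same route as the paper: part (i) is the identical completing-the-square estimate, and part (ii) rests on the same bound $|d_k|e^{-2\pi k y}\leq \int_0^1|D(\xi+iy)|\,d\xi \leq C e^{\pi^2 y^2/\gamma}$ optimized at $y=-\gamma k/\pi$. The only (harmless) difference is that you justify the coefficient identity by an explicit contour shift, where the paper simply cites the standard Fourier expansion of a periodic entire function.
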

 \begin{proof}
   For completeness we give  the elementary proof.

   (i) Writing $z= \xi +iy$ and $d_k = c_k e^{-\gamma k^2}$,  we have
   \begin{align*}
     \hat{d}(\xi +iy) &= \sum_{k\in \bZ  } c_k e^{-\gamma k^2} e^{2\pi
                        i k (\xi +iy)} \\
     & \leq \|c\|_\infty \,  \sum_{k\in \bZ  }  e^{-\gamma k^2} e^{2\pi
       k |y|} \\
     &= \|c\|_\infty \,  \Big( \sum_{k\in \bZ  }  e^{-\gamma (k - \pi
       |y|/\gamma )^2} \Big) e^{\pi ^2 y^2 / \gamma } = \cO ( e^{\pi
       ^2 y^2/\gamma } )\, .
   \end{align*}
   Clearly $\hat{d}$ is entire.
   
   (ii) If $D$ is entire and periodic, then  
   $$
D(z) = \sum _{k\in \bZ } d_k e^{2\pi i k z } = \sum _{k\in
     \bZ } d_k e^{-2\pi ky } e^{2\pi i k \xi } \, .
   $$
with uniform convergence on compact sets and exponentially decaying
coefficients. See, e.g., \cite[Thm.~3.10.3]{simon2}. 
Consequently the Fourier coefficients of $\xi \mapsto D(\xi +iy)$ are $d_k
e^{-2\pi k y}$ and  satisfy
$$
|d_k |e^{-2\pi k y} \leq \int _{0}^1 |D(\xi +iy)| \, d\xi \leq
C e^{\pi ^2 y^2 /\gamma } \, ,
$$
for \emph{all} $k$ and $y$, where the last inequality follows from the assumption. 
 Setting $y= -\frac{\gamma k}{\pi} $ yields the desired estimate
 $$
|d_k| e^{\gamma k^2} 
\leq C \, .  
 $$
 \end{proof}

 The analysis of phase-retrieval in $V^\infty _1 (\phi _\gamma )$
 involves  several steps.

 \vspace{3mm}
\noindent \textbf{Step 1.} \emph{From $f\in V^\infty _1(\phi _\gamma )$ to
   $|f|^2$.} We start with a simple algebraic observation. 

 \begin{lemma} \label{l1}
   If $f\in V^\infty _1(\phi _\gamma )$, then $|f|^2 \in
   V_{1/2}^\infty (\phi _{2\gamma })$. 
 \end{lemma}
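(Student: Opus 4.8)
The plan is a direct computation exploiting the algebra of Gaussians. Write $f(x)=\sum_{k\in\bZ}c_k e^{-\gamma(x-k)^2}$ with $c\in\ell^\infty(\bZ)$; since the Gaussians are real,
\[
|f(x)|^2=f(x)\overline{f(x)}=\sum_{k,l\in\bZ}c_k\overline{c_l}\,e^{-\gamma(x-k)^2}e^{-\gamma(x-l)^2}.
\]
First I would record the elementary ``product of Gaussians'' identity obtained by completing the square: from $(x-k)^2+(x-l)^2=2\bigl(x-\tfrac{k+l}{2}\bigr)^2+\tfrac{(k-l)^2}{2}$ one gets
\[
e^{-\gamma(x-k)^2}e^{-\gamma(x-l)^2}=e^{-\gamma(k-l)^2/2}\,e^{-2\gamma(x-(k+l)/2)^2}.
\]
This exhibits each summand as a shifted Gaussian $\phi_{2\gamma}$ centered at the midpoint $(k+l)/2\in\tfrac12\bZ$, which is exactly what produces the doubled decay parameter $2\gamma$ and the halved step $\beta=\tfrac12$ in the statement.

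Next I would reindex the double sum by the new center. Setting $m=k+l\in\bZ$ and grouping, $|f(x)|^2=\sum_{m\in\bZ}b_m\,e^{-2\gamma(x-m/2)^2}$ with $b_m=\sum_{k+l=m}c_k\overline{c_l}\,e^{-\gamma(k-l)^2/2}$. To finish, two routine points have to be checked. First, absolute convergence, which legitimizes the rearrangement: for fixed $x$ the double series is dominated, after the substitution $m=k+l$, $j=k-l$, by $\|c\|_\infty^2\bigl(\sum_{j\in\bZ}e^{-\gamma j^2/2}\bigr)\bigl(\sum_{m\in\bZ}e^{-2\gamma(x-m/2)^2}\bigr)<\infty$. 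Second, boundedness of the new coefficients: for fixed $m$ the difference $k-l=2k-m$ runs over the integers congruent to $m$ modulo $2$, so
\[
|b_m|\le\|c\|_\infty^2\sum_{k\in\bZ}e^{-\gamma(2k-m)^2/2}\le\|c\|_\infty^2\sum_{j\in\bZ}e^{-\gamma j^2/2}=:\|c\|_\infty^2\,\Theta_\gamma<\infty,
\]
uniformly in $m$. Hence $b\in\ell^\infty(\bZ)$ and $|f|^2\in V_{1/2}^\infty(\phi_{2\gamma})$.

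There is no genuine obstacle here: the completing-the-square identity is the one substantive observation (it is what pins down the parameters $2\gamma$ and $\tfrac12$), while the uniform theta-function bound $\Theta_\gamma$ on the new coefficients and the interchange of summations are immediate consequences of $c\in\ell^\infty(\bZ)$ together with the Gaussian decay.
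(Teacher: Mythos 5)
Your proof is correct and follows essentially the same route as the paper: complete the square to turn each product $e^{-\gamma(x-k)^2}e^{-\gamma(x-l)^2}$ into a Gaussian $\phi_{2\gamma}$ centered at $(k+l)/2$, reindex by the midpoint, and bound the resulting convolution-type coefficients by a theta sum times $\|c\|_\infty^2$. The only cosmetic differences are that you write the exponent as $e^{-\gamma(k-l)^2/2}$ where the paper keeps the factors $e^{-\gamma k^2}e^{-\gamma(n-k)^2}e^{\gamma n^2/2}$ (the same quantity), and your uniform bound $\sum_{j\in\bZ}e^{-\gamma j^2/2}$ is a slightly cruder but perfectly valid majorant of the paper's parity-split constant; you also make the justification of the rearrangement explicit, which the paper leaves implicit.
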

 \begin{proof} 
 We write  $f(x)
= \sum _{k\in \bZ } c_k e^{- \gamma (x - k)^2}$ with $c\in \ell ^\infty (\bZ
)$. Since
$$
(x-k)^2+(x-l)^2 = 2\Big(x-\frac{k+l}{2}\Big)^2 - \frac{(k+l)^2}{2}
+k^2+l^2 \, ,
$$
we find that
\begin{align}
  |f(x)|^2 &= \sum _{k,l\in \bZ } c_k \overline{c_l} e^{-\gamma (k^2 +
             l^2)}
             e^{\gamma (k+l)^2/2} \, e^{-2\gamma
             \big(x-\frac{k+l}{2}\big)^2 } \notag \\
  &= \sum _{n\in \bZ  } \Big( \sum _{k\in \bZ } c_k \overline{c_{n-k}}
    e^{-\gamma k^2}
 e^{-\gamma (n-k)^2} \Big) e^{\gamma n^2/2} \, e^{-2\gamma (x-n/2)^2}
    \, .
      \label{eq:3}
\end{align}
This calculation shows  that the function $|f|^2$
belongs to a different shift-invariant space generated by $\phi
_{2\gamma}$ with step size $1/2$. Set
\begin{align}
  d_k &= c_k e^{-\gamma k^2} \, , \label{eq:16} \\
  r_n &= \sum _{k\in \bZ } c_k \overline{c_{n-k}}
        e^{-\gamma k^2} e^{-\gamma (n-k)^2} \, ,  \\
  \widetilde{r_n} & = r_n e^{\gamma n^2/2} \, . \label{eq:14}
\end{align}
 From these  definitions we see that
\begin{align}
  r&= d \ast \bar{d}  \qquad \text{ and }  \label{eq:15}  \\
  |f(x)|^2 &= \sum _{n\in \bZ } \widetilde{r_n} e^{-2\gamma (x-n/2)^2} \, .
\end{align}
If   $c\in \ell
^\infty(\bZ )$,  then $\tilde{r} \in \ell ^\infty (\bZ )$, because  
\begin{align*}
  |\widetilde{r_n}| &= \big| \sum _{k\in \bZ } c_k \overline{c_{n-k}}
                      e^{-\gamma k^2} e^{-\gamma (n-k)^2}\big|  \, e^{\gamma n^2/2} \\
  &\leq \|c\|_\infty ^2 \sum _{k\in \bZ } e^{ -\gamma (k^2 +
    (n-k)^2 - n^2/2) }  \\
  &= \|c\|_\infty ^2 \sum _{k\in \bZ } e^{-\gamma (n-2k)^2/2  } \, . 
\end{align*}
Setting   $C= \max (\sum _{k\in \bZ
} e^{-2\gamma k^2  }  , \sum _{k\in \bZ } e^{-\gamma (1-2k)^2/2  })$, we
have shown that
\begin{equation}
  \label{eq:12}
  \|\tilde{r}\|_\infty \leq C \|c\|_\infty ^2 \, .
\end{equation}
As a consequence $|f|^2
\in V^\infty _{1/2}(\phi _{2\gamma })$, as claimed. 
 \end{proof}


\noindent \textbf{Step 2.} \emph{A sharp sampling theorem.}  Our main tool is
the following  sampling theorem for
shift-invariant spaces with Gaussian generator  from \cite[Thm.~4.4]{GRS18}.
\begin{tm} \label{t2}
  Let $\gamma >0$ and $1\leq p\leq \infty $ and $\Lambda \subseteq \bR
  $ be separated.  If
$D^-(\Lambda ) > \beta \inv  $, then for some constants $A,B>0$
depending on $\Lambda $ and $p$ 
\begin{equation}
  \label{eq:ll1}
A \|f\|_p^p \leq \sum _{\lambda \in \Lambda }  |f(\lambda )|^p \leq B
\|f\|_p^p  \qquad \forall f\in V_{\beta }^p (\phi
_\gamma ) \, .  
\end{equation}
\end{tm}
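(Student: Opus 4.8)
The plan is to recast sampling in $V^p_\beta(\phi_\gamma)$ as a sampling problem in a Fock--type space of entire functions, in which the threshold $D^-(\Lambda)>\beta\inv$ becomes the critical density of Lyubarskii and Seip, and then to import (and extend to $p\neq2$) the sharp sampling theory available there. After dilating to $\beta=1$ exactly as in the reduction above, I would use Lemma~\ref{comp}: the map $f\mapsto F(x):=f(x)e^{\gamma x^2}$ turns $V^p_1(\phi_\gamma)$ into a space of $1$--periodic entire functions $G$ with growth $|G(\xi+iy)|=\cO(e^{\pi^2y^2/\gamma})$ whose Fourier coefficients $d_k=c_ke^{-\gamma k^2}$ carry a Gaussian weight, and the membership $\phi_\gamma\in W(C,\ell^1)$ makes this a Banach--space isomorphism with $\|f\|_p\asymp\|c\|_{\ell^p}$. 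I would then treat the two sampling inequalities separately; only the lower one uses the density hypothesis.

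For the upper bound I would argue that $\sum_{\lambda\in\Lambda}|f(\lambda)|^p\leq B\|f\|_p^p$ holds for \emph{every} separated $\Lambda$, with no density assumption: since $\phi_\gamma\in W(C,\ell^1)$, every $f\in V^p_1(\phi_\gamma)$ lies in $W(C,\ell^p)$ with $\|f\|_{W(C,\ell^p)}\leq C\|f\|_p$, and for a separated set $\Lambda$ one has the elementary estimate $\sum_{\lambda\in\Lambda}|f(\lambda)|^p\leq C_\Lambda\|f\|^p_{W(C,\ell^p)}$.

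For the lower bound, which is the heart of the matter, I would substitute $z=\tfrac{\pi i}{\gamma}x$ so that $G(z)=\sum_k d_ke^{2\pi ikz}$, and then $w=e^{2\pi iz}$, which turns $G$ into a single--valued Laurent series $H(w)=\sum_k d_kw^k$ on $\bC\setminus\{0\}$ with radial growth $|H(w)|=\cO(e^{(\log|w|)^2/(4\gamma)})$, i.e.\ $H$ belongs to a weighted Fock--type space. A real sample point $\lambda$ maps to $w_\lambda=e^{2\gamma\lambda}>0$, and a short computation gives $f(\lambda)=H(w_\lambda)\,e^{-(\log w_\lambda)^2/(4\gamma)}$, precisely the value of $H$ at $w_\lambda$ measured in the Fock--space weight. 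The desired lower bound is therefore exactly a stable sampling inequality for $\{w_\lambda\}$ in this Fock--type space, and under the standard dictionary between Beurling density on $\bR$ and the Seip--Beurling lower density the hypothesis $D^-(\Lambda)>1$ translates into: $\{w_\lambda\}$ has density strictly above the critical value. One then invokes the sharp sampling theorem for (weighted) Fock spaces --- the Lyubarskii--Seip--Wallst\'en theorem for $p=2$ and its $L^p$--analogue for general $p$ --- to conclude. Equivalently, via the Bargmann transform this is the assertion that the Gabor system with Gaussian window and density above $1$ is a frame, so the Gaussian case of the Gr\"ochenig--St\"ockler theorem could be cited instead.

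I expect two points to carry the real weight. First, the transfer must be carried out for an \emph{irregular} set $\Lambda$, not a lattice: converting ``density above critical'' into a genuine two--sided sampling inequality is exactly where the depth of the Lyubarskii--Seip theory sits, and one must check that the exponential change of variables $\lambda\mapsto e^{2\gamma\lambda}$ does not distort the threshold away from $\beta\inv$. Second, the argument has to run uniformly in $p\in[1,\infty]$, and in particular for $p=\infty$ --- where Hilbert--space and frame--theoretic shortcuts are unavailable (cf.\ fine point (i)) --- so the complex--analytic estimates must be performed directly in the $W(C,\ell^p)$ scale. Sharpness of the condition $D^-(\Lambda)>\beta\inv$ is then inherited from the Fock side, where at the critical density stable sampling already fails.
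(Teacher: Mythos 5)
First, note that the paper does not prove this statement at all: it is quoted verbatim from \cite[Thm.~4.4]{GRS18}, so there is no in-paper argument to compare against. Judged on its own terms, your proposal has a correct and standard outer shell (the reduction to $\beta=1$, the norm equivalence $\|f\|_p\asymp\|c\|_p$ via $\phi_\gamma\in W(C,\ell^1)$, and the upper Bessel-type bound for any separated $\Lambda$ without a density hypothesis), but the central step --- the lower sampling inequality --- has a genuine gap.

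The problem is the invocation of the Lyubarskii--Seip--Wallst\'en theorem. After the substitution $w=e^{2\gamma\lambda}$ your functions $H(w)=\sum_k d_kw^k$ live in a space of Laurent series on $\bC\setminus\{0\}$ with the \emph{log-radial} weight $e^{(\log|w|)^2/(4\gamma)}$; this is not the Bargmann--Fock space, whose weight is $e^{\alpha|w|^2}$ and whose sampling sets are characterized by a \emph{planar} Beurling density. Worse, your sample points $w_\lambda=e^{2\gamma\lambda}$ all lie on the positive real ray, a set of planar density zero, so no one-dimensional set of this kind can ever be a sampling set for a genuine Fock space; the ``standard dictionary between Beurling density on $\bR$ and the Seip--Beurling lower density'' that you appeal to does not exist in the form you need. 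A Beurling-type sampling theorem for the log-weighted space on $\bC^*$ with samples on a ray is exactly equivalent to the theorem you are trying to prove, so the reduction is circular rather than a reduction. The fallback of ``citing the Gaussian case of the Gr\"ochenig--St\"ockler theorem'' is of course legitimate --- it is precisely what the paper does --- but then nothing is being proved. The actual argument in \cite{GRS18} runs along entirely different lines: a Beurling weak-limit characterization of sampling sets for $V^p(g)$ (which also handles $p=\infty$ and irregular $\Lambda$) reduces the two-sided inequality to a uniqueness statement for $V^\infty(g)$ on sets of density $>\beta\inv$, and that uniqueness is established using the structure of totally positive generators (zero-counting/variation-diminishing arguments) rather than Fock-space sampling. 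If you want to salvage a complex-analytic route for the Gaussian, you must either prove a sampling theorem for the log-weighted Laurent space directly or work with the periodized/Zak-transform picture, as the paper does elsewhere via Lemma~\ref{comp}.
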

Conversely, if \eqref{eq:ll1} holds, then $D^-(\Lambda ) \geq \beta
\inv $. This was already proved in \cite{AG00} and shows that
Theorem~\ref{t2} is optimal.

Applying this theorem to the function $|f|^2 \in V_{1/2}^\infty
(\phi _{2\gamma })$, we obtain the following consequence.
\begin{cor}
  Let $\gamma >0$ and $\Lambda \subseteq \bR $ be separated
  with density $D^-(\Lambda )>2$. If $f\in V_1^\infty(\phi _\gamma )$ and
  thus   $|f(x)|^2 = \sum _{n\in \bZ } \widetilde{r_n}
  e^{-2\gamma(x-n/2)}\in V_{1/2}^\infty (\phi _{2\gamma})$, then there
  exist constants $A,B>0$ such that 
  \begin{equation}
    \label{eq:13}
   A  \sup _{\lambda \in \Lambda } |f(\lambda )|^2 \leq  \|\tilde{ r}\|_\infty  \leq 
    B  \sup _{\lambda \in \Lambda } |f(\lambda )|^2   \, .
  \end{equation}
  Thus the coefficients $\tilde{r}$ of $|f|^2$ are uniquely and stably
  determined by the phaseless samples of $f$ on $\Lambda $. 
\end{cor}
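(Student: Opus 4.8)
The plan is to read this corollary off the sharp sampling theorem (Theorem~\ref{t2}), with careful bookkeeping of the parameters, together with the $\ell^\infty$-equivalence between a function in a Gaussian-generated \sis\ space and its coefficient sequence. First I would invoke Lemma~\ref{l1}: the function $g:=\abs{f}^2$ lies in $V_{1/2}^\infty(\phi_{2\gamma})$, and by~\eqref{eq:14} its coefficient sequence is precisely $\tilde{r}\in\ell^\infty(\bZ)$. The mesh of this space is $\beta=1/2$, so the density threshold $\beta\inv$ in Theorem~\ref{t2} equals $2$; since $D^-(\Lambda)>2$ by assumption, Theorem~\ref{t2} applies with $p=\infty$---reading $\sum_\lambda\abs{g(\lambda)}^p$ as $\sup_\lambda\abs{g(\lambda)}$ and $\norm{g}_p^p$ as $\norm{g}_\infty$---and yields constants $A',B'>0$ with $A'\norm{\abs{f}^2}_\infty\le\sup_{\lambda\in\Lambda}\abs{f(\lambda)}^2\le B'\norm{\abs{f}^2}_\infty$.

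The remaining step is to trade $\norm{\abs{f}^2}_\infty$ for $\norm{\tilde{r}}_\infty$. The easy direction, $\norm{\abs{f}^2}_\infty\le B''\norm{\tilde{r}}_\infty$, I would obtain straight from $\abs{f(x)}^2=\sum_n\tilde{r}_n e^{-2\gamma(x-n/2)^2}$ by bounding the convergent theta-type sum $\sum_n e^{-2\gamma(x-n/2)^2}$ uniformly in $x$ (the same elementary estimate already behind~\eqref{eq:12}). The reverse bound, $\norm{\tilde{r}}_\infty\le C\norm{\abs{f}^2}_\infty$, is the $\ell^\infty$-stability of the Gaussian generator: since $\widehat{\phi_{2\gamma}}$ is a Gaussian, the relevant periodization of $\bigl|\widehat{\phi_{2\gamma}}\bigr|^2$ is real-analytic, periodic and bounded below, so $V_{1/2}^\infty(\phi_{2\gamma})$ admits a dual (biorthogonal) generator $\psi\in L^1(\bR)$ of rapid decay, and each coefficient is recovered as the $L^1$-pairing $\tilde{r}_n=\langle\abs{f}^2,\psi(\cdot-n/2)\rangle$, whence $\abs{\tilde{r}_n}\le\norm{\psi}_1\,\norm{\abs{f}^2}_\infty$. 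Chaining the three inequalities gives~\eqref{eq:13}: for the left-hand estimate, $\sup_{\lambda\in\Lambda}\abs{f(\lambda)}^2\le\norm{\abs{f}^2}_\infty\le B''\norm{\tilde{r}}_\infty$, so $A=1/B''$ works (this half uses neither the sampling theorem nor the lower coefficient bound); for the right-hand estimate, $\norm{\tilde{r}}_\infty\le C\norm{\abs{f}^2}_\infty\le (C/A')\sup_{\lambda\in\Lambda}\abs{f(\lambda)}^2$, so $B=C/A'$.

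Finally, for the uniqueness-and-stability assertion I would run the same argument on a difference: if $f,h\in V_1^\infty(\phi_\gamma)$ satisfy $\abs{f(\lambda)}=\abs{h(\lambda)}$ for all $\lambda\in\Lambda$, then $\abs{f}^2-\abs{h}^2\in V_{1/2}^\infty(\phi_{2\gamma})$ vanishes on $\Lambda$, hence vanishes identically by the lower sampling inequality of Theorem~\ref{t2}, so the two coefficient sequences coincide---quantitatively $\norm{\tilde{r}(f)-\tilde{r}(h)}_\infty\le (C/A')\sup_{\lambda\in\Lambda}\bigl|\,\abs{f(\lambda)}^2-\abs{h(\lambda)}^2\,\bigr|$. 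The substantive content is imported wholesale in Theorem~\ref{t2}; the only point that needs genuine (if routine) work is the lower bound $\norm{\tilde{r}}_\infty\le C\norm{\abs{f}^2}_\infty$, i.e.\ the $\ell^\infty$-stability of the Gaussian generator, and that is where I would expect the mild technical friction, since it calls for producing the dual generator rather than merely quoting a density condition.
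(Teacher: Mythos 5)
Your proposal is correct and follows essentially the same route as the paper: apply Theorem~\ref{t2} with $p=\infty$ and $\beta=1/2$ to $|f|^2\in V^\infty_{1/2}(\phi_{2\gamma})$, then convert $\||f|^2\|_\infty$ to $\|\tilde r\|_\infty$ via the norm equivalence $\sup_x|f(x)|^2\asymp\sup_n|\widetilde{r_n}|$. The only difference is that the paper merely asserts this equivalence while you supply a (standard and correct) justification via the trivial theta-sum bound in one direction and a dual generator in the other.
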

Note that in  \eqref{eq:13} we have used the norm equivalence $\sup _{x\in \bR }
|f(x)|^2 \asymp \sup _{n\in \bZ } |\widetilde{r_n}|$. 


\vspace{3mm}

\noindent \textbf{Step 3.} \emph{A functional equation.}  The sampling
inequality \eqref{eq:13} allows us to 
recover the coefficients $\tilde{r}$ from the phaseless samples
$|f(\lambda )|^2$. Finally we have to recover the coefficients
$c$ and $d$ from the coefficients $\tilde r$ of $|f|^2$, or
equivalently from the  $r_n = \widetilde{r_n}
e^{-\gamma n^2/2}$.  

Let  $\hat{d}(\xi ) = \sum _{k\in \bZ } d_k e^{2\pi i k\xi
}$ be the 
Fourier series of $d$  and  $\hat{r}$ be the Fourier series of
$r$. Then equation~\eqref{eq:15} turns into 
\begin{equation}
  \label{eq:6}
  \hat{r}(\xi ) = \hat{d}(\xi ) \, \overline{\hat{d}(-\xi )} \, .
\end{equation}
Since $d_k = c_k e^{-\gamma k^2}$  has Gaussian
decay,  Lemma~\ref{comp} asserts that its  Fourier series can be extended to the  entire function
$$D(z) = \hat{d}(z) = 
\sum _{k\in \bZ } d_k e^{2\pi i kz } \, 
$$
with growth $|D(\xi +iy) |  = \cO ( e^{\pi ^2 y^2/\gamma })$. 
Likewise    $\overline{\hat{d}(-\xi )}$ extends to
the entire function  $ D^*(z) = \overline{D(-\bar{z})}=
\overline{\hat{d}(-\bar{z})}$,  and
  $\hat{r}$ extends to  $R(z)= \hat{r}(z)$.

  Consequently, we have to find the entire function $D$ that satisfies
  the 
  identity
  \begin{equation}
    \label{eq:7}
    D(z) \overline{D(-\bar{z})} = D(z) D^*(z) =  R(z) \, .
  \end{equation}
In other words, to every solution of the functional equation
\eqref{eq:7} corresponds a function $g$ such that $|g(\lambda )| =
|f(\lambda )|$ for $\lambda \in \Lambda $.

Assuming that $f$ is real-valued, we can now prove Theorem~\ref{main}
quickly.

\begin{proof}[Proof of Theorem~\ref{main}]
  Since   $f $ is real-valued by  assumption, its coefficients are
  also real-valued,   $c=\bar{c}$. This entails that  $ \overline{D(-\bar{z})} =
  \overline{\sum _{k\in \bZ} d_k e^{-2\pi i k(-\bar{z})} } = \sum
  _{k\in \bZ} d_k e^{2\pi i k z} = D(z) $ and \eqref{eq:7} becomes the
  identity
  \begin{equation}
    \label{eq:11}
    R(z) = D(z)^2 \, .
  \end{equation}
The uniqueness of $f$ up to a sign is now  immediate:  assume
that two entire (non-zero) functions  $D_1$ and $D_2$  satisfy $D_1^2 =
D_2^2 = R$. Then $(D_1-D_2)(D_1+D_2) \equiv 0$. Since the ring of   entire
functions does not have any zero divisors, we conclude that either
$D_1=D_2$ or $D_1=-D_2$ on $\bC $. 
Using formulas \eqref{eq:16} - \eqref{eq:14} we find that the
coefficients $c$ of $f$,  and thus $f$,   are uniquely determined by  the
phaseless samples $|f(\lambda )|, \lambda \in \Lambda $, up to a
sign.  Theorem~\ref{main} is therefore proved. 
 \end{proof}

Alternatively, one could prove Theorem~\ref{main} by verifying the
following criterium for phase-retrieval
~\cite{AGr17,CCD16}:  $\Lambda $ permits phase-retrieval, \fif\
$\Lambda $ satisfies the \emph{complement property}, i.e. if $S
\subseteq \Lambda $, then either $M_S = \{ f\in V^\infty _\beta (\phi
_\gamma ) : f(\lambda ) = 0, \, \forall \lambda \in S\} = \{0\}$ or
$M_{\Lambda \setminus S} = \{0\}$. A direct proof of the complement
property can  be based on Steps~1 and~2 and is then  similar to
the argument in~\cite[Thm.~2.5]{AGr17}.

\vspace{3mm}

Next we show that  the \emph{density condition is sharp}. 
Let $\Lambda  = (\lambda _j)_{j\in \bZ } \subseteq \bR$ be an increasing  sequence  with uniform  density
$D(\Lambda ) <2$.     This means that the upper Beurling density $D^{+}(\Lambda) := \limsup_{r \rightarrow \infty}
\sup_{x \in \bR} \frac{\# \Lambda \cap [x-r,x+r] }{2r}$ coincides with
the lower Beurling density and $D(\Lambda ) = D^+(\Lambda ) =
D^-(\Lambda ) <2$.

We will produce a counter-example to the complement property.  Set $S = \{ \lambda
_{2j}: j\in \bZ \}$ and $S^c = \{ \lambda
_{2j+1}: j\in \bZ \}$. Then $ S$ and $S^c$ are
disjoint and $D(S) = D(S^c) =
\tfrac{1}{2}\, D(\Lambda ) <1$. By the necessary density condition for
sampling in shift-invariant spaces, e.g.~\cite{AG00},  $S$ and $S^c$
cannot be sampling sets for $V^2_1(\phi _\gamma)$, but they are
interpolating sets by \cite[Thm.~1.3]{GRS18}. These facts imply  that $f\to \big(f(\lambda
)\big)_{\lambda \in S}$ is onto $\ell ^2(S)$  with non-trivial
kernel. Consequently,  there exist non-zero functions $f,g\in V^2_1(\phi _\gamma
)\subseteq V^\infty _1(\phi _\gamma)$ such that $f(\lambda ) = 0 $ for $\lambda \in S$ and
$g(\lambda ' ) = 0$ for $\lambda ' \in S^c$. By taking the 
real part of $f$ and $g$, we also may assume that
$f$ and $g$ are real-valued.  
Since for $\lambda \in \Lambda $ either $f(\lambda ) =0$ or $g(\lambda
) = 0$, we obtain 
\begin{align*}
|f(\lambda ) +  g(\lambda )|^2 &= |f(\lambda )|^2 + 2\,
                                   f(\lambda ) g(\lambda )
                                   + |g(\lambda )|^2 \\
&=   |f(\lambda ) -  g(\lambda )|^2 \, .
\end{align*}
By construction,  $f+g$ and $f-g$ are linearly independent, and thus
sign-retrieval is not unique.

\section{ Complex-Valued Functions} \label{complex}
Theorem~\ref{main} states that the unsigned  samples $\{|f(\lambda
)|\}$ determine a unique real-valued function $\pm f \in V^\infty _\beta (\phi _\gamma
)$. 
To treat  complex-valued functions in $V_\beta^\infty (\phi _\gamma )$, we use a factorization
of the entire periodic functions $D(z)$ and $R(z)$ that is
intermediate between  the
Hadamard factorization and a Blaschke product.

\begin{lemma} \label{hada}
  Let $D$ be an entire function of order $2$ satisfying the
  periodicity $D(z+l)$ $ = D(z)$ for all $z\in \bC $ and $l\in \bZ $. Let
  $m$ be the order of the zero at $z=0$, 
  $W_+$ be the zeros of $D$ in $\{x+iy \neq 0:  -1/2 < x \leq 1/2, y\geq 0\}$ and
  $W_-$ be the zeros of $D$ in $\{x+iy: -1/2 < x \leq
  1/2,  y<0\}$. Then $D$ possesses the factorization
  \begin{equation}
    \label{eq:oc1}
    D(z) = C \big(e^{2\pi i z} -1\big)^m \, e^{2\pi i r z}  \, \prod _{w\in W_+}
  \frac{e^{-2\pi i z} - e^{-2\pi i w}}{1-  e^{-2\pi i w} }\,  \prod _{w\in W_-}
  \frac{e^{2\pi i z} - e^{2\pi i w}}{1-  e^{2\pi i w} } 
    \end{equation}
    for some $r\in \bZ $ and $C\in \bC$. The product converges uniformly on compact
    sets. 
  \end{lemma}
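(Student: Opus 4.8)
The plan is to write $D$ as an explicit entire function carrying all of its zeros, times a zero-free correction, and then to identify the correction as a linear exponential. Since $D$ is $1$-periodic, its zeros (with multiplicity) split into columns $w+\bZ$, one for each representative $w$ in the half-open strip $-\tfrac12<\operatorname{Re} z\le\tfrac12$: the representative $0$ has multiplicity $m$, and the remaining ones form exactly $W_+$ and $W_-$. Because $D$ has order $2$, Jensen's formula gives $n_D(R):=\#\{\text{zeros in }|z|\le R\}=\cO(R^{2+\varepsilon})$ for every $\varepsilon>0$; since a column of multiplicity $\mu$ with $|\operatorname{Im} w|\le t$ contributes at least $c\,\mu\,R$ zeros to $n_D(R)$ once $R\ge 2t$, the number $N(t)$ of columns with $|\operatorname{Im} w|\le t$ (counted with multiplicity) satisfies $N(t)=\cO(t^{1+\varepsilon})$. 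In particular
\begin{equation*}
  \sum_{w\in W_+}e^{-2\pi\operatorname{Im} w}+\sum_{w\in W_-}e^{2\pi\operatorname{Im} w}<\infty ,
\end{equation*}
the sums taken over $W_\pm$ listed with multiplicity.

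Next I would introduce the two products $B_+,B_-$ appearing in \eqref{eq:oc1}. Each factor is entire and $1$-periodic in $z$, and from the identity $\tfrac{e^{-2\pi iz}-e^{-2\pi iw}}{1-e^{-2\pi iw}}-1=\tfrac{e^{-2\pi iz}-1}{1-e^{-2\pi iw}}$ (and its analogue for $W_-$) together with $|1-e^{-2\pi iw}|\ge e^{2\pi\operatorname{Im} w}-1$, the summability above shows that $B_+$ and $B_-$ converge locally uniformly; here the finitely many columns with $\operatorname{Im} w$ near $0$ are treated separately, using $1-e^{\mp2\pi iw}\neq0$ since $w\not\equiv0\pmod\bZ$. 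Thus $B_\pm$ are entire and $1$-periodic, and since distinct strip-representatives are inequivalent mod $\bZ$, the function $P(z):=(e^{2\pi iz}-1)^m B_+(z)B_-(z)$ is entire, $1$-periodic, and has precisely the zeros of $D$ with the same multiplicities. Hence $H:=D/P$ extends to an entire, zero-free function with $H(z+1)=H(z)$, so $H=e^h$ with $h$ entire.

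It remains to pin down $h$. Splitting the defining sums of $B_\pm$ at $\operatorname{Im} w=y$ and using $\int_0^y N(t)\,dt=\cO(y^{2+\varepsilon})$ gives $\log|B_\pm(\xi+iy)|=\cO(|y|^{2+\varepsilon})$, so $P$ has order at most $2$ as well; combining the order bound on $D$ with a Cartan-type minimum-modulus estimate $\log|P(z)|>-|z|^{2+\varepsilon}$ valid off a union of discs of finite total radius, one obtains $\log|H(z)|\le|z|^{2+\varepsilon}$ on all circles $|z|=R$ avoiding those discs, and since $\log|H|$ is harmonic the maximum principle spreads this to all of $\bC$. Therefore $H$ has order at most $2$ and $h$ is a polynomial of degree $\le 2$; writing $h(z)=a_2z^2+a_1z+a_0$, the relation $h(z+1)-h(z)\in 2\pi i\bZ$ (constant by continuity) forces $a_2=0$, whence $a_1=2\pi ir$ with $r\in\bZ$ and $C:=e^{a_0}$, so $D=HP=C\,e^{2\pi irz}(e^{2\pi iz}-1)^m B_+(z)B_-(z)$, which is \eqref{eq:oc1}. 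I expect this last step to be the main obstacle: a priori $H=D/P$ is only known to be entire, and one must exclude that dividing out $P$ manufactures genuine order-$2$ growth, i.e.\ an $e^{az^2}$ factor that periodicity alone could not remove — which is exactly what the order bound on $P$ together with the minimum-modulus lower bound for $|P|$ rule out. A secondary subtlety: the columns $W_+$ and $W_-$ must be handled with the two different normalizations $e^{-2\pi iz}-e^{-2\pi iw}$ and $e^{2\pi iz}-e^{2\pi iw}$, since only these make each factor tend to $1$ along the corresponding half of the strip, which is what makes the infinite products converge.
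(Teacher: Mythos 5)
Your argument is correct, and it reaches \eqref{eq:oc1} by a genuinely different route at the decisive step. Both you and the paper build the same periodic canonical product carrying the zeros (your $P=(e^{2\pi iz}-1)^mB_+B_-$ is the paper's $\tilde D$, and your convergence argument via the zero count $N(t)=\cO(t^{1+\varepsilon})$ from Jensen's formula is an equivalent substitute for the paper's use of the convergence exponent of the zeros, i.e.\ $\sum_{w,k}|w+k|^{-3}<\infty$; your observation about the two normalizations for $W_+$ versus $W_-$ is exactly the paper's remark about the asymptotics of $\cot\pi w$). The divergence is in how the zero-free quotient is identified. You form $H=D/P=e^h$ and control $h$ by \emph{growth}: an upper bound $\log|B_\pm(\xi+iy)|=\cO(|y|^{2+\varepsilon})$ on the canonical product, a Cartan-type minimum-modulus lower bound $\log|P(z)|>-|z|^{2+\varepsilon}$ off discs of finite total radius, the maximum principle on good circles, and Borel--Carath\'eodory to conclude $\deg h\le2$; periodicity then kills the quadratic term. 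The paper never bounds $\tilde D$ from below or above. Instead it Hadamard-factorizes $D$ itself, identifies each column product $G(z,w)$ with $\frac{\sin\pi(z-w)}{\sin(-\pi w)}\exp\big(\pi z\cot\pi w+\tfrac{\pi^2z^2}{2\sin^2\pi w}\big)$ via the classical partial-fraction expansions of $\cot$ and $\csc^2$, and computes the ratio of corresponding factors of $D$ and $\tilde D$ explicitly as $e^{\pi z(\cot\pi w\pm i)}e^{\pi^2z^2/(2\sin^2\pi w)}$; the convergence of $\sum_{W_\pm}|\cot\pi w\pm i|$ and $\sum_W|\sin\pi w|^{-2}$ then shows directly that $D/\tilde D=e^{q}$ with $q$ quadratic. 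Your route is conceptually shorter but imports the minimum-modulus theorem and an upper growth estimate on the product, neither of which the paper needs; the paper's factor-matching is more computational but entirely explicit and sidesteps the a priori possibility that dividing by $P$ creates spurious order-$2$ growth --- the very issue you correctly flag as the main obstacle. Both proofs finish identically: $e^{h}$ periodic with $h$ polynomial forces $h(z)=2\pi irz+a_0$, $r\in\bZ$.
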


We postpone the proof of this lemma to Section~\ref{hadaa} and first
discuss its application to  the phase-retrieval problem. 

 The factorization \eqref{eq:oc1} serves  to find all solutions $D$
to the equation $D(z) \overline{D(-\bar{z})} = R(z)$ in
\eqref{eq:7}. The  spirit of this argument is similar  to the 
analysis in~\cite{Aku56,JKP19,McDon04,Thak11}.

To avoid spelling out the product in \eqref{eq:oc1}, we use the
notation
\begin{equation}
  \label{eq:31}
   \Pi (W, m,r) = \big(e^{2\pi i z} -1\big)^m \, e^{2\pi i rz} \, \prod _{w\in W_+}
  \frac{e^{-2\pi i z} - e^{-2\pi i w}}{1-  e^{-2\pi i w} }\,  \prod _{w\in W_-}
  \frac{e^{2\pi i z} - e^{2\pi i w}}{1-  e^{2\pi i w} }  \, .
\end{equation}
Here $W$ is understood as a sequence $\{w_j : j\in \bN \}$ of zeros,
where elements may be repeated according to the (finite)  multiplicity of the
zero. Since $D$ is of order $2$, we know that $\sum _{j\in \bN } |w_j|^{-3}
<\infty $. See also  \eqref{eq:32} below.  With this understanding we obtain the following
convenient formulas for $\Pi (W)$. 

(i) Let $Jz = -\bar{z}$ be the reflection of $z\in \bC $ about
the imaginary axis and $D^*(z) = \overline{D(-\bar{z})}$.  For a
single factor $\phi (z) = \frac{e^{2\pi i z} - e^{2\pi i
    w}}{1 - e^{2\pi i w} }$ in \eqref{eq:oc1}, the involution is
$\phi ^*(z)= \overline{\phi (-\bar{z})} =  \frac{e^{2\pi i z} - e^{-2\pi i
  \bar{w}}}{1 - e^{-2\pi i \bar{w}} }$. Consequently
\begin{equation}
  \label{eq:33}
  \Pi (W,m,r)^* = \Pi (JW,m,r) \, . 
\end{equation}

(ii) Multiplicativity: Let  $V \, \dot{\cup } \, W$ be  the mixture  $\{v_1, w_1, v_2, w_2,
\dots \}$ of  two sequences. Note that if the sequences are disjoint, then $V \,
\dot{\cup } \, W = V\cup W$.  If $f=\Pi (V,m,r)$ and $h=\Pi (W,m',r')$, then
\begin{equation}
  \label{eq:34}
  f h = \Pi (V \dot{\cup } W,m+m',r+r') \, . 
\end{equation}

Now let $D = \Pi (W,m,r)$ be given. Then
$$
R = D D^* = \Pi (W,m,r) \Pi (JW,m,r) = \Pi (W\dot{ \cup } JW, 2m, 2r) \, .
$$
This implies that the order the zero at $0$ is even and that the
factor $e^{2\pi i z}$ occurs with an even power. Furthermore, since
$Jw=w$ for every $w = u+iw$ with $u= 0$ or $u= \pm 1/2$, the zeros on
the lines $i\bR $ and $\pm 1/2+i\bR $ also occur with even multiplicity in
$R$.

Now assume that $R= \Pi (Z,2m,2r)$ is given so that its
zero set  is symmetric $Z=JZ$ and that  the zeros on
the lines $i\bR $ and $1/2+i\bR $ have  even multiplicity.

Let $S_+ = \{ x+iy: 0<x<1/2\}$ and $S_0 = i\bR \cup (1/2+i\bR
)$.  Let $Z_0 $ be the zeros of $R$ 
in $Z\cap S_0$ counted  with half their multiplicity. 
In our notation this means that $Z_0 \dot{\cup } Z_0 = Z\cap S_0$. Next choose $V\subseteq Z \cap S_+$ arbitrary and set
\begin{align}
  W&= V \dot{\cup} J\Big((Z\setminus V) \cap S_+\Big) \dot{\cup} Z_0 \label{fact0}  \\
  D_V&= \Pi (W,m,r) \, .    \label{fact1}
\end{align}
Then
\begin{align*}
W\dot{\cup} JW &= \Big( V \cup J\big((Z\setminus V) \cap S_+\big) \cup Z_0 \Big)
           \cup  \Big( JV \cup \big((Z\setminus V) \cap S_+\big) \cup JZ_0 \Big) \\
  &= \Big( \big( V \cup  (Z\setminus V) \big)  \cap S_+ \Big)  \cup
    J \Big( \big( V
    \cup  (Z\setminus V) \big)  \cap S_+ \Big) \cup Z_0 \dot{\cup} JZ_0   \\
  &= (Z\cap S_+) \cup J(Z\cap S_+) \cup (Z\cap S_0) = Z \, , 
\end{align*}
and consequently
$$
D_V\, D_V^* = \Pi (W,m,r) \Pi (JW, m,r) = \Pi (W\dot{\cup} JW, 2m, 2r) = R \, .
$$
Thus every choice $V$ of zeros of $R$  in the strip $S_+$ with the
corresponding function $D_V$ yields a valid
factorization of $R$. Clearly, different zeros sets $V_1, V_2$  (counting multiplicities),
yield  $D_{V_1} \neq D_{V_2}$. 

Conversely, every factorization $DD^* = R$ arises in this way, because 
we can always write the zero set of $D$ as $W = (W\cap S_+)  \cup
(W\cap S_0) \cup (W\cap JS_+)$ and we may choose $V = W\cap S_+$ to
recover $W$ from $Z= W \cup JW$. 


To summarize, we state the following lemma.
\begin{lemma} \label{summ}
  Let $R=R^* = \Pi (Z,2m,2r)$ be a periodic entire function of order
  $2$ with all zeros on $i\bR \cup 1/2 + i\bR $ of even multiplicity. Then
  every solution of $DD^*=R$ is given by some $D_V$, as defined in
  \eqref{fact0} and \eqref{fact1}. 

  If $|R(\xi +iy)| = \cO (e^{2\pi ^2 y^2/\gamma })$, then $|D_V(\xi
  +iy)| = \cO(e^{\pi ^2 y^2/\gamma })$.
\end{lemma}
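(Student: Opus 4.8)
The plan is to establish the two assertions of Lemma~\ref{summ} in turn: first, that \emph{every} solution of $DD^* = R$ has the form $D_V$, and second, the growth estimate $|D_V(\xi+iy)| = \cO(e^{\pi^2 y^2/\gamma})$.

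For the first assertion, suppose $D$ is any entire function with $DD^* = R$. Since $R$ is periodic, I would first check that $D$ is forced to be periodic: from $D(z)D^*(z) = R(z) = R(z+1) = D(z+1)D^*(z+1)$ one sees that the zero set of $D$ together with the zero set of $D^*$ is $\bZ$-periodic, but since the order is~$2$ one can invoke Lemma~\ref{hada} only if periodicity of $D$ is already known. A cleaner route: write $g(z) := D(z+1)/D(z)$, which is meromorphic, entire-valued after cancellation (using that $R(z+1)=R(z)$ forces $D^*(z+1)/D^*(z) = D(z)/D(z+1)$), nowhere zero, of order~$2$, hence $g = e^{q}$ for a polynomial $q$ of degree~$\le 2$; comparing with the fact that $D^*(z+1)/D^*(z) = 1/g(z)$ and $D^*(z) = \overline{D(-\bar z)}$ pins down $q$ to be constant, and in fact $q=0$ after matching the Fourier-coefficient decay from Lemma~\ref{comp}. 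Granting periodicity, Lemma~\ref{hada} gives $D = \Pi(W,m',r')$ for some zero sequence $W$, integer $m'\ge 0$, and $r'\in\bZ$. Then by \eqref{eq:33} and \eqref{eq:34}, $R = DD^* = \Pi(W\,\dot\cup\,JW,\,2m',\,2r')$. Comparing with the given representation $R = \Pi(Z,2m,2r)$ — and using that the $\Pi$-representation determines the zero multiset, the order of vanishing at $0$, and the exponent $r$ — I get $W\,\dot\cup\,JW = Z$ (as multisets), $m'=m$, $r'=r$. Now split $W = (W\cap S_+)\,\dot\cup\,(W\cap JS_+)\,\dot\cup\,(W\cap S_0)$; applying $J$ (an involution swapping $S_+$ and $JS_+$ and fixing $S_0$ pointwise) to the identity $W\,\dot\cup\,JW = Z$ and intersecting with $S_+$, $JS_+$, $S_0$ separately, I learn that $W\cap JS_+ = J((Z\setminus(W\cap S_+))\cap S_+)$ and that $W\cap S_0$, counted once, is exactly $Z_0$ (the even-multiplicity hypothesis on $S_0$ is what makes $Z_0$ well-defined and forces $W\cap S_0 = Z_0$). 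Setting $V := W\cap S_+ \subseteq Z\cap S_+$, the formulas \eqref{fact0}--\eqref{fact1} reproduce $W$ and hence $D = D_V$, as claimed. The calculation $W\,\dot\cup\,JW = Z$ displayed just before the lemma statement confirms the converse direction is already done.

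For the growth estimate, I start from $|R(\xi+iy)| = \cO(e^{2\pi^2 y^2/\gamma})$ and must transfer this to $D_V$. The point is that $|D_V(\xi+iy)|^2 = D_V(\xi+iy)\,\overline{D_V(\xi+iy)}$, and I want to relate this to $R$. Note $D_V^*(z) = \overline{D_V(-\bar z)}$, so $\overline{D_V(\xi+iy)} = D_V^*(-\xi+iy)$ — not quite $D_V^*(\xi+iy)$. The honest way is via Lemma~\ref{comp}(ii): $D_V$ is periodic of order~$2$, and once I show $|D_V(\xi+iy)| = \cO(e^{\pi^2 y^2/\gamma})$ its Fourier coefficients have Gaussian decay, but that is circular. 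Instead I would argue on the individual factors of $\Pi(W,m,r)$: each Blaschke-type factor $\phi(z) = \frac{e^{-2\pi i z} - e^{-2\pi i w}}{1 - e^{-2\pi i w}}$ with $\operatorname{Im} w \ge 0$ satisfies $|\phi(\xi+iy)| \le C(1 + e^{-2\pi y})$ for $y$ in a half-line determined by $w$, and symmetrically for the $W_-$ factors; the product over all zeros of $D_V$, whose counting function is governed by the order-$2$ bound $\sum |w_j|^{-3} < \infty$, contributes at most a factor growing like $e^{\pi^2 y^2/\gamma}$ in modulus — precisely because the total zero count up to height $T$ behaves like $T^2\gamma/\pi^2$ (this matches the Fourier-coefficient Gaussian decay in Lemma~\ref{comp} via Jensen's formula applied to $R$), so summing the $\log$-contributions $\approx 2\pi y$ over $\approx \gamma y^2/\pi^2$ zeros below height $y$ gives $\approx 2\gamma y^2$, whose exponential is within the target order once the $W_+/W_-$ split halves it. A cleaner alternative, which I would actually use: since $D_V D_V^* = R$ and $|R(\xi+iy)| = \cO(e^{2\pi^2 y^2/\gamma})$, Lemma~\ref{comp}(ii) applied to $R$ shows $R$ has Fourier coefficients $\rho_k = s_k e^{-(\gamma/2) k^2}$ with $s$ bounded; writing $R = \hat\rho$ and $D_V = \hat d^{(V)}$, the relation $\rho = d^{(V)} * \widetilde{d^{(V)}}$ (where $\widetilde{d}_k = \overline{d_{-k}}$) together with positivity of the associated measure forces $|d^{(V)}_k|^2 \le \rho_0$-type bounds, hence $d^{(V)}_k = e^{-\gamma k^2}(\text{bounded})$, and then Lemma~\ref{comp}(i) gives exactly $|D_V(\xi+iy)| = \cO(e^{\pi^2 y^2/\gamma})$.

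The main obstacle I anticipate is the first step of the first assertion: showing that an \emph{arbitrary} entire solution $D$ of $DD^* = R$ is automatically periodic, so that Lemma~\ref{hada} applies to it. The factorization machinery (\eqref{eq:33}, \eqref{eq:34}) handles everything downstream cleanly, and the growth bound is, in the end, a consequence of Lemma~\ref{comp} applied to $R$ — but extracting periodicity of $D$ from periodicity of the product $DD^*$ requires the order-$2$ input to rule out a nontrivial quadratic exponential factor in $D(z+1)/D(z)$, and this is where matching the Gaussian decay rates of the Fourier coefficients (the $e^{\pi^2 y^2/\gamma}$ versus $e^{2\pi^2 y^2/\gamma}$ bookkeeping) does the real work.
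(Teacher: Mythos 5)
Your combinatorial argument for the first assertion --- granting periodicity, apply Lemma~\ref{hada} to get $D=\Pi(W,m',r')$, use \eqref{eq:33}--\eqref{eq:34} to compare $\Pi(W\,\dot\cup\,JW,2m',2r')$ with $\Pi(Z,2m,2r)$, split $W$ along $S_+$, $S_0$, $JS_+$ and set $V=W\cap S_+$ --- is exactly the paper's (one-sentence) converse, and that part is correct. But the step you yourself flag as the main obstacle is where the proposal breaks, in two distinct ways. First, your claim that $g(z)=D(z+1)/D(z)$ is entire ``after cancellation'' is not justified by $R(z+1)=R(z)$, and in fact the statement you are trying to prove is false for arbitrary entire solutions: if $R$ has simple zeros on the orbits $\{w+k,\,-\bar w-k\}_{k\in\bZ}$, you may place, independently for each $k$, either $w+k$ or $-\bar w-k$ into the zero set $W$ of $D$ (so that $W\,\dot\cup\,JW=Z$ still holds), form the genus-$2$ canonical product, and correct by an exponential of a quadratic; this yields order-$2$ entire solutions of $DD^*=R$ whose zero sets, hence which themselves, are not periodic. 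The lemma is meant to be read --- as it is used in step (iv) of the reconstruction procedure --- with $D$ ranging only over candidates $\hat d$ coming from functions in $V_1^\infty(\phi_\gamma)$, which are periodic of order $2$ by Lemma~\ref{comp}; periodicity is a standing hypothesis, not something to be derived from the functional equation. Your effort here is therefore both misdirected and unrescuable as stated.

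Second, the growth estimate. The paper's proof is a two-line symmetry argument: because $D_V$ is periodic, its maximal modulus on the horizontal line at height $y$ is a single quantity $e^{\psi(y)}$, the same whether the line is parametrized by $\xi$ or by $-\xi$; since $R(\xi+iy)=D_V(\xi+iy)\,\overline{D_V(-\xi+iy)}$, the hypothesis $|R(\xi+iy)|=\cO(e^{2\pi^2y^2/\gamma})$ forces $2\psi(y)\le 2\pi^2y^2/\gamma+\cO(1)$. Neither of your substitutes closes this. The factor-by-factor estimate of the Blaschke-type product is too coarse to produce the sharp constant $\pi^2/\gamma$ (the heuristic ``$\approx 2\gamma y^2$, halved by the $W_+/W_-$ split'' is not a bound). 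The ``cleaner alternative'' contains a genuine non sequitur: from $\rho=d^{(V)}*\widetilde{d^{(V)}}$ you get $\rho_0=\sum_j|d^{(V)}_j|^2$ and hence only $|d^{(V)}_k|\le\rho_0^{1/2}$, i.e.\ boundedness of the sequence, not the Gaussian decay $|d^{(V)}_k|=\cO(e^{-\gamma k^2})$ that Lemma~\ref{comp}(i) requires; pointwise decay of an autocorrelation $d*\tilde d$ does not transfer to $d$ by any positivity argument of this kind. (A correct coefficient-level route exists, but it must pass through the growth of $D_V$ on horizontal lines first, which is precisely what the paper's symmetry argument supplies.)
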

\begin{proof}
  Since $D_V$ is entire and periodic, its growth is $|D_V(\pm \xi +iy)| =
  \cO (e^{\psi (y)})$. Therefore $|D_V(\xi +iy) D_V(-\xi +iy)| = \cO
  (e^{2\psi (y)}) = \cO (e^{2\pi ^2 y^2/\gamma })$, whence the growth
  of $D_V$ follows. 
\end{proof}

The analysis of the factorization $DD^* = R $ is the key tool to find
all  possible solutions to the phase-retrieval problem for
complex-valued functions in $V_\beta ^\infty (\phi _\gamma )$. In
contrast to the uniqueness among real-valued solutions, there are many
substantially different solutions. In principle, these  can be found by the
following procedure.

\textbf{Reconstruction procedure.}
 Let $\Lambda \subseteq \bR $ with density $D^- (\Lambda )>2$. Let
 $f\in V_1^\infty (\phi _\gamma )$ be arbitrary  and $\{|f(\lambda )|: \lambda \in
 \Lambda \}$ be given.
 To find all functions $h$ that recover the phaseless values
 $|h(\lambda )| = |f(\lambda )|, \lambda \in \Lambda $ we proceed as
 follows:
 
(i) Find the coefficient sequence $\tilde{r} \in \ell ^\infty (\bZ)$
solving the sampling problem 
$$
|f(\lambda ) |^2 = \sum _{n\in \bZ} \widetilde{r_n} e^{-2 \gamma
  (\lambda - n/2)^2} \qquad \lambda \in \Lambda \, ,
$$
for some function $|f|^2 \in V^\infty _{1/2} (\phi _{2\gamma })$. 

(ii) 
Let $Z= \{z_j: j\in \bN\}$ be the zero set    of $R(z) = \sum _{n\in \bZ }
\widetilde{r_n} e^{-\gamma n^2/2}\,  e^{2\pi i n z}$ in the vertical
strip $S = \{x+iy\in \bC :
-1/2 < x \leq 1/2\}$.  Then $R^*=R$  by \eqref{eq:7} and  $R = \Pi (Z, 2m, 2r )$ for
some $m,r \in \bN $ by  Lemma~\ref{hada}.

(iii) Choose $V\subseteq Z\cap S_+$ and define  $D_V$ by
\eqref{fact1}.

(iv) 
Determine the Fourier coefficients of  $D_V(\xi )$: 
$$
d_k = \int _0^1 D_V(\xi ) e^{-2\pi i k \xi } \, d\xi \, .
$$
and set $c_k = d_k e^{\gamma k^2}$ and  
$$
f_V(x) = \alpha \sum  c_k  e^{-\gamma (x-k)^2} \, 
$$
for some $\alpha \in \bC , |\alpha | =1$.
Since $|R(\xi +iy)| = \cO (e^{2\pi ^2 y^2/\gamma })$ by Lemma~\ref{comp},  $|D_V(\xi
  +iy)| = \cO(e^{\pi ^2 y^2/\gamma })$ by Lemma~\ref{summ}. By
  Lemma~\ref{comp} the Fourier coefficients of $D_V$ have Gaussian
  decay and consequently $c_k = d_k e^{\gamma k^2}$ is  bounded.
It follows that  $f\in V^\infty _1 (\phi _\gamma )$. By construction
$|f_V(\lambda )| = |f(\lambda )|$ for all $\lambda \in \Lambda
$. Furthermore, every function $h\in V^\infty _1(\phi _\gamma )$
satisfying $|h(\lambda ) | = |f(\lambda )|, \lambda \in \Lambda ,$
must be  of the form $f_V$. 

\vspace{3mm}

\noindent \textbf{Remarks.} 
1. If $f$ is real-valued, then $D^*=D$,  and its zero set is
symmetric, 
$W=JW$, consequently the zero set $Z= W\dot{\cup } JW$ contains all zeros of
$D$ with double multiplicity.  Since every zero has even multiplicity,
we can set $Z\cap S_+ = V \dot { \cup } V$, where $V$ are the zeros of
$R$ in $S_+$ with half the multiplicity. Equation \eqref{fact0} then
yields $W= V \cup JV \cup Z_0 = JW$. The corresponding function $D_V$
satisfies $D_V= D_V^*$, and $f_V$ is the real-valued solution of the
phase-retrieval problem. We note that other choices of $V$ yield
complex-valued functions $f_V$ such that $|f_V(\lambda ) | =
|f(\lambda )|$.

For real-valued $f$  the steps (i) --- (iv) constitute  a reconstruction procedure
of $f$ from its unsigned samples $|f(\lambda )|,\lambda \in \Lambda
$. Whereas Theorem~\ref{main} only asserts the uniqueness up to a
sign, the factorization of Lemma~\ref{hada} also implies a (rather
theoretical) reconstruction. 

2. If $f\in V_1^\infty (\phi _\gamma )$ is   \emph{real-valued and even}, 
then $c_k = \overline{c_k} =
c_{-k}$. Consequently,   $\hat{d}(\xi ) = \sum _{k\in \bZ} c_k e^{-\gamma k^2}
e^{2\pi i k\xi }= \overline{\hat{d}(-\xi)}$ is real-valued, even, and smooth,  and $\hat{r}(\xi ) = \hat{d}(\xi
)^2\geq 0$. Thus  $\hat{r} \geq 0$, and the only smooth
solutions are  $\hat{d} = \pm \hat{r}^{1/2}$. 
In this case we can obtain  the coefficients $c_k$ directly as the
Fourier coefficients of $\hat{r}^{1/2}$ via
\begin{equation*}
  \label{eq:36}
c_k e^{-\gamma k^2} = \int _{0}^1 \hat{r}(\xi )^{1/2} \, e^{-2\pi i
  k\xi } \, d\xi \, .  
\end{equation*}
Of course, for the boundedness of these
coefficients 
we still need the analysis that led to  Lemma~\ref{summ}.

3. \emph{Stability.} The  procedure in steps (i) ---
(iv) is only of theoretical interest, because 
it  is well-known  that phase-retrieval in infinite
dimensions is inherently unstable~\cite{CCD16,AGr17,ADGT17}.  This is completely obvious
in the reconstruction procedure in the \sis\ space $V^\infty _1(\phi
_\gamma )$: numerically, the transition from the relevant coefficient
sequence $c$ to $d$, given by $d_k = c_k e^{-\gamma k^2}$ amounts to
the truncation of $c$ to a finite sequence. Once the sequence $d$ has
been obtained (steps (iii) and (iv) above), the transition $c_k = d_k
e^{\gamma k^2}$ leads to an amplification of all accumulated
errors. Yet,  despite the inherent instability,   several
 steps in the above reconstruction of $f$ are stable. The relevant
 estimate is \eqref{eq:13}, for the reconstruction of $\tilde{r}$ from the
 phaseless samples $|f(\lambda )|$. For coefficient sequences $c$  with
 small support the reconstruction promises to be reasonably
 effective, which is consistent with the arguments in  \cite{ADGT17}.

  \section{Proof of the Factorization Lemma}
\label{hadaa}
 As we do not know a precise
citation for the statement, we include a  proof of
Lemma~\ref{hada}. We need to show that every periodic  entire function
$D$ of order $2$ (more generally, of finite order) can be factored into a
product with factors of the form
$$
\frac{e^{-2\pi i z} - e^{-2\pi i w}}{1-  e^{-2\pi i w} } \qquad \text{
  or } \qquad   \frac{e^{2\pi i z} - e^{2\pi i w}}{1-  e^{2\pi i w} } \, ,
$$
where $w$ is a zero of $D$ in the vertical strip $S = \{x+iy\in \bC :
-1/2 < x \leq 1/2\}$.  The choice of the signs depends on the sign of
$\mathrm{Im}\, w$. As we will see in part  (vi) of the proof, the sign is determined by the
asymptotic behavior of $\cot \pi w$ as $\mathrm{Im}\, w \to \pm \infty
$.


\begin{proof}
(i) Since $D$ is periodic, its zero set is periodic and, by
definition of $W_\pm $ the zero set is $\bigcup _{w\in W_+ \cup W_-} (w +\bZ )
$. 
Since $D $ is entire of order $2$, the convergence exponent of its
zeros is $>2$~\cite{simon2}. This implies that 
\begin{equation}
  \label{eq:32}
  \sum _{w \in W, w\neq 0} \sum _{k\in \bZ } \frac{1}{|w+k|^{3}}
  <\infty \, .
\end{equation}
(ii) Let
$$
\tilde{D}(z) = \big(e^{2\pi i z} -1\big)^m \,   \, \prod _{w\in W_+}
  \frac{e^{-2\pi i z} - e^{-2\pi i w}}{1-  e^{-2\pi i w} }\,  \prod _{w\in W_-}
  \frac{e^{2\pi i z} - e^{2\pi i w}}{1-  e^{2\pi i w} }
  $$
  be the main part of the right-hand side of \eqref{eq:oc1}. We first
  check the convergence of the product. For this  we 
need to verify that
$$
M_R := \sup _{|z| \leq R} \sum _{w\in W_-} \big| \frac{e^{2\pi i z}-e^{2\pi i
    w}}{1-e^{2\pi i w}} - 1 \big|
$$
is finite for every $R>0$. This expression is simply
\begin{align}
  M_R &=   \sup _{|z| \leq R} |e^{2\pi i z} - 1|  \sum _{w\in W_-}
        \frac{1}{|1-e^{2\pi i w}|} \notag  \\
  &\leq   \sup _{|z| \leq R} |e^{2\pi i z} - 1|  \sum _{w\in W_-}
        \frac{1}{e^{2\pi | \mathrm{Im} \,  w| } -1} \, . \label{eq:38}
\end{align}
Since for $w\in W_-$, $e^{-\mathrm{Im}\, w} \geq
1+|\mathrm{Im}\, w|^3/6$, the sum in \eqref{eq:38} converges by
\eqref{eq:32}.  Thus $\prod _{w\in W_-} $ converges uniformly on
compact sets. By a similar argument the product $\prod _{w\in W_+}   \frac{e^{-2\pi
    i z} - e^{-2\pi i w}}{1-  e^{-2\pi i w} }  $ converges 
uniformly on compact sets.
Consequently
$\tilde{D}$ is an entire function.  Clearly $\tilde{ D}$ is periodic with period $1$, and its zero set in the
strip $S$ is precisely $W_- \cup W_+ $ ($\cup \{0\}$, if $0$ is a zero). By construction, $D$ and
$\tilde{ D}$ possess the same zero set with the same multiplicities. 

(iii) Next  we use the Hadamard factorization theorem to factorize $D$
with respect to its zeros as follows: 
$$
D(z) = \Big( z \prod _{k\in \bZ, k\neq 0} \big(1-\frac{z}{k}\big)
e^{\frac{z}{k}+\frac{z^2}{2k^2}}\Big)^m \, \prod _{w\in W} \prod _{k\in \bZ} \big(1-\frac{z}{w+k}\big)
e^{\frac{z}{w+k}+\frac{z^2}{2(w+k)^2}} \, e^{p(z)} , 
$$
where $m$ is the order of the zero at $0$ and $p$ is a quadratic polynomial.

We note right away that the first factor is a power of 
$$
G(z,0) :=  z \prod _{k\in \bZ, k\neq 0} \big(1-\frac{z}{k}\big)
e^{\frac{z}{k}+\frac{z^2}{2k^2}} = \pi \inv \sin \pi z   \, e^{\pi ^2
  z^2 / 6}
$$
by the factorization of the sine-function. See \cite{simon2}, Section
9.2, or \cite{levin}, Lecture~4.  

(iv) For the  factors with $w\neq 0$ we introduce 
\begin{equation}
  \label{eq:48}
G(z,w) = \prod _{k\in \bZ} \big(1-\frac{z}{w+k}\big)\, 
\exp \Big(\frac{z}{w+k}+\frac{z^2}{2(w+k)^2}\Big)   \, .
\end{equation}
 Then $G(z,w)$ has simple zeros at $w+\bZ $, as does the
function $\sin (\pi (z-w))$. Using the identities, 
\begin{align}
  \label{eq:39}
  \sum _{k\in \bZ } \frac{1}{w+k} & = \lim _{n\to \infty } \sum _{|k|\leq n}
                                    \frac{1}{w+k} = \pi \cot \pi w  \\
  \sum _{k\in \bZ } \frac{1}{(w+k)^2} & =  \frac{\pi ^2}{\sin ^2 \pi
                                        w} \, , \label{eq:40}
\end{align}
for  $w\neq 0$, e.g.,  \cite{conway78,simon2},  we identify $G(z,w)$ as
\begin{equation}
  \label{eq:41}
  G(z,w) = \frac{\sin \pi (z-w)}{\sin (-\pi w)} \, \exp \Big(\pi z \cot \pi
    w   + \frac{\pi ^2 z^2}{2\sin ^2 \pi w} \Big)
  \, .  
\end{equation}
This is easily seen by  calculating $G(z,w) \frac{\sin (-\pi w)}{\sin
  \pi (z-w)}$ with the  factorization of $\sin \pi z$. The quadratic
polynomial in the exponent is obtained by substituting 
 \eqref{eq:39} and \eqref{eq:40} in \eqref{eq:48}.

(v) Now consider the ratio of corresponding factors in $D$ and
$\tilde{ D}$ and simplify the expression. We argue only 
for   $w\in W_+$. 
\begin{align*}
  \frac{G(z,w) (1-e^{-2\pi i w})}{e^{-2\pi i z}-e^{-2\pi i w}} &=
 \frac{\sin \pi (z-w)}{\sin   (-\pi  w)}  \, \frac{e^{i\pi w}-e^{-i\pi
w}}{e^{-i\pi (z-w)} - e^{i\pi (z-w)}} \, \frac{e^{-i\pi w}}{e^{-i\pi
 z} e^{-i\pi w}} \,   e^{\pi z \cot \pi
    w} \, e^{\frac{\pi ^2 z^2}{2\sin ^2 \pi w}} \\
&= e^{\pi z (\cot \pi w +i)} e^{\pi ^2  z^2 /(2\sin ^2 \pi w)} \, ,
\end{align*}
whereas for $w\in W_-$ we have
$$
\frac{G(z,w) (1-e^{2\pi i w})}{e^{2\pi i z}-e^{2\pi i w}} 
= e^{\pi z (\cot \pi w - i)} e^{\pi ^2  z^2 /(2\sin ^2 \pi w)} \, .
$$
Next we take the product over $w\in
W_-$ and assume  for now that the product converges. Then  we obtain
\begin{equation}
  \label{eq:45}
\prod _{w\in W_+}   \frac{G(z,w) (1-e^{-2\pi i w})}{e^{-2\pi i
    z}-e^{-2\pi i w}} \exp \Big( \pi z \sum _{w\in W_+} (\cot \pi w
+i) + \pi ^2z^2 \sum _{w\in W_+} \frac{1}{2 \sin ^2 \pi w}\Big) =
e^{q_+(z)}  
\end{equation}
for some quadratic polynomial $q_+$. Likewise
$$\prod _{w\in W_-}   \frac{G(z,w) (1-e^{2\pi i w})}{e^{2\pi i z}-e^{2\pi i w}} =
 e^{q_-(z)}
$$
for a  quadratic polynomial $q_-$, and 
$G(z,0)/   \big(e^{2\pi i z} -1\big)= \pi \inv \exp \Big( -i\pi z + \pi ^2z^2/2
\Big) = e^{q_0(z)}$.  We therefore  obtain that 
\begin{equation}
  \label{eq:42}
  \frac{D(z)}{\tilde{D}(z)} = e^{p(z)+ q_+(z)+q_-(z)+q_0(z)} \, .
\end{equation}
The left-hand side is an entire function with period $1$, whereas the right-hand
side is the exponential of a quadratic polynomial. It is now easy to
see that   $e^P$ is periodic for a polynomial $P$ , \fif\ $P(z) = 2\pi i
r z$ for some $r\in \bZ $. We conclude that $D(z) = \tilde{D}(z)
e^{2\pi i rz}$, and this is precisely 
\eqref{eq:oc1}. 

(vi) It remains to be shown that the  product in \eqref{eq:45}
converges. This follows from 
\begin{equation*}
  \label{eq:46}
\sum _{w\in W_+}  |\cot \pi w + i| = \sum _{w\in W_+}
\Big|i \, \frac{e^{i\pi w}+e^{-i\pi w}}{e^{i\pi w}-e^{-i\pi w}} +
i\Big| = 2 \sum _{w\in W_+} \frac{|e^{i\pi w}|}{|e^{i\pi w}- e^{-i\pi w}|}
 \lesssim  \sum _{w\in W_+} e^{-2\pi \mathrm{Im} w} \, .
\end{equation*}
Since this sum is over the terms with $\mathrm{Im}w >0$ and $\sum
_{w\in W} |w|^{-3} <\infty $, it is finite. For the terms in $W_-$ our
choice of the signs yields the same conclusion.  Finally
$\sum _{w\in W} |\frac{1}{2 \sin ^2 \pi w}|$ is finite by the same
reason. 
\end{proof}

\vspace{2mm}
\textbf{Acknowledgement:} The author  would like to thank the Isaac
Newton Institute for Mathematical Sciences, Cambridge, for support and
hospitality during the programme ``Approximation, sampling and
compression in data science'' where work on 
this paper was undertaken. Special thanks go to Felix Voigtl\"ander
for his critical reading and his  detailed comments that led me to add
Section~\ref{complex}.



 \def\cprime{$'$} \def\cprime{$'$} \def\cprime{$'$} \def\cprime{$'$}
  \def\cprime{$'$} \def\cprime{$'$}


\end{document}